\theoremstyle{plain}
\newtheorem{thm}{Theorem}[section]
\newtheorem{lem}{Lemma}[section]
\newtheorem{defi}{Definition}[section]
\newtheorem{prop}{Proposition}[section]
\newtheorem{corollary}{Corollary}[section]
\theoremstyle{remark}
\newtheorem{rem}{Remark}[section]
\numberwithin{equation}{section}
\def\ens{\ensuremath}                                     
\newcommand\mtb[1]{\ens{\mathbb{#1}}}                     
	   \newcommand{\Q}{\mtb{Q}}   	\newcommand{\R}{\mtb{R}}	
\newcommand{\T}{\mtb T}	   \newcommand{\Z}{\mtb{Z}}	    \newcommand{\F}{\mtb{F}}
\newcommand{\C}{\mtb{C}}    
\newcommand\mtc[1]{\ens{\mathcal{#1}}}           
  \newcommand{\eps}{\ens{\varepsilon}}
\newcommand\bld[1]{\ens{\boldsymbol{#1}}}
\newcommand{\dist}{{\bf dist}}                       
\newcommand\hl[1]{{\center \color{red} \ens{\bs\bs\bs} \\}} 
\newcommand\hsp[1]{\mbox{}\hspace{#1mm}} 
\newcommand\vsp[1]{\par \vspace{#1mm}} 
\def\bs{{\bigstar}}                     
\renewcommand{\v}{\vsp}
\newcommand{\h}{\hsp}
\newcommand{\Dst}{\displaystyle}
\newcommand{\re}{\mathrm{Re}}
\newcommand{\im}{\mathrm{Im}}
\newcommand{\fractional}[1]{|\h{-.3}|#1|\h{-.3}|}
\renewcommand{\ni}{\noindent}
\def\J{\ens{\Sigma}}
\newcommand{\Rmnum}[1]{\expandafter\@slowromancap\romannumeral #1@}
\title[Approximate embedding into $\Z^2$]{Approximate embedding of \\ large polygons into $\Z^2$}
\author[M. Boshernitzan {}]{Michael Boshernitzan {}}
\address{Department of Mathematics, Rice University, Houston, TX~77005, USA}
\email{michael@rice.edu}
\thanks{The author was supported in part by research grant: NSF-DMS-1102298}
\date{August 3, 2012}
\begin{document}
\maketitle
\begin{abstract}
Let $\Z^2$ denote the standard lattice in the plane $\R^2$.
We prove that given a finite subset $S\subset\R^2$ and $\eps>0$, 
then for all sufficiently large dilations $t>0$  there exists 
a rotation $\rho\colon \R^2\to\R^2$ around the origin 
such that $\dist(\rho(tz),\Z^2)<\eps$, for all $z\in S$.
The result, in a larger generality, has been proved in 2006 by 
Tamar Ziegler (improving earlier results by Furstenberg, Katznelson, 
Weiss). The proof presented in the paper is short and self-contained;
it is not based on the Furstenberg correspondence principle.
\end{abstract}

\section{Conjecture and result}\label{sec:first}
For $z\in\R^k$, denote by $\fractional{z}=\dist(z,\Z^k)$ the Euclidean distance
of $z$ from the standard lattice set $\Z^k\subset\R^k$.

For a finite metric space $X=(X,d)$ and $k\geq1$, denote by $\J_k(X)$
the collection of isometric embeddings $f\colon X\to\R^k$. Set
\begin{equation}\label{eq:tau}
\tau_{k}(X)=\inf_{f\in\J_k(X)}\big(\max_{x\in X} \ \fractional{f(x)}\big).
\end{equation}

A metric space $X$ is called {\em $k$-flat}\/ if it embeds isometrically into $\R^k$ 
(i.e., if $\J(X)\neq\emptyset$).
By definition, $\tau_{k}(X)=\infty$ if $X$ is not 
$k$-flat. Clearly  $\tau_{k}(X)<\frac{\sqrt k}2$ if  $X$ is $k$-flat.

Given a metric space $X=(X,d)$ and $t>0$, we write  $tX$  for the metric space $(X,td)$.

The following theorem is restatement of a special case of T. Ziegler's result, \cite[Theorem~1.3]{Z},
which superseded  an earlier result \cite{FKW} by Furstenberg, Katznelson and Weiss.

\begin{thm}\label{conj:1}
For $k\geq2$ and any finite $k$-flat metric space $X$
\begin{equation}\label{eq:conj}
\lim_{t\to+\infty}\tau_{k}(tX)=0. 
\end{equation}
\end{thm}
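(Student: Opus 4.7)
I identify $\R^2\cong\C$ and $\Z^2\cong\Z[i]$, treating $k=2$ in detail; the extension to $k\ge 3$ is sketched at the end. Fix an isometric embedding $f_0\colon X\to\C$ with $f_0(x_1)=0$, and write $f_0(X)=\{0,z_2,\ldots,z_n\}$. Every element of $\J_2(tX)$ is obtained from $tf_0$ by composing with an orthogonal transformation $R\in O(2)$ and translating by some $v\in\C$; since $\fractional{\cdot}$ is $\Z[i]$-invariant, $v$ may be absorbed into $\Z[i]$ and taken to be $0$, after which the $x_1$ coordinate contributes nothing. The theorem for $k=2$ therefore reduces to: for every $\eps>0$, for all sufficiently large $t$ there exists $\theta\in[0,2\pi)$ with $\fractional{te^{i\theta}z_j}<\eps$ for all $j=2,\ldots,n$.

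\textbf{Main idea: rotation averaging.} I would pick a smooth non-negative $\chi\colon\C\to[0,\infty)$ supported in the disk $|w|<\eps$ whose Fourier transform is also non-negative---taking $\chi=\phi*\widetilde\phi$ with $\widetilde\phi(w)=\overline{\phi(-w)}$ and $\phi$ a smooth bump in $|w|<\eps/2$ gives $\widehat\chi=|\widehat\phi|^2\ge 0$. Let $\widetilde\chi(w)=\sum_{v\in\Z[i]}\chi(w-v)$ be its $\Z[i]$-periodization, so $\widetilde\chi(w)>0$ forces $\fractional{w}<\eps$, and set
\[
I(t)=\int_0^{2\pi}\prod_{j=2}^n\widetilde\chi(te^{i\theta}z_j)\,d\theta.
\]
Showing $I(t)>0$ for all sufficiently large $t$ suffices.

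\textbf{Fourier expansion, main term, and error.} Expanding $\widetilde\chi(w)=\sum_{m\in\Z[i]}c_m\,e^{2\pi i\re(\bar m w)}$ with $c_m=\widehat\chi(m)\ge 0$, exchanging sum with integral, and collecting the phase into $2\pi t\,\re\bigl(e^{i\theta}A(\vec m)\bigr)$ for $A(\vec m)=\sum_{j=2}^n\bar m_j z_j\in\C$, the angular integral reduces to the classical Bessel integral $2\pi J_0\bigl(2\pi t|A(\vec m)|\bigr)$. Writing $L=\{\vec m\in\Z[i]^{n-1}:A(\vec m)=0\}$ one gets
\[
I(t)=2\pi\sum_{\vec m\in L}\prod_{j=2}^n c_{m_j}+2\pi\sum_{\vec m\notin L}\Bigl(\prod_{j=2}^n c_{m_j}\Bigr)J_0\bigl(2\pi t|A(\vec m)|\bigr).
\]
The main term is at least $2\pi c_0^{n-1}>0$ since $\vec m=0\in L$ and every $c_m\ge 0$. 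For the error term, $|J_0|\le 1$ together with absolute summability of $\sum_{\vec m}\prod_j|c_{m_j}|<\infty$ (from Schwartz decay of $\widehat\chi$) gives a dominating bound, while $J_0\bigl(2\pi t|A(\vec m)|\bigr)\to 0$ as $t\to\infty$ for every fixed $\vec m\notin L$; dominated convergence forces the error to $0$. Hence $I(t)>0$ eventually.

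\textbf{Main obstacle and higher $k$.} The delicate point is that each Bessel factor decays only like $(t|A(\vec m)|)^{-1/2}$ and $|A(\vec m)|$ can be arbitrarily small on $\Z[i]^{n-1}\setminus L$, so a direct uniform-in-$\vec m$ estimate is awkward; the pointwise-decay plus dominated-convergence route, enabled by absolute summability of the Fourier coefficients, bypasses this cleanly. For general $k\ge 2$ the scheme is identical, with $\int_0^{2\pi}d\theta$ replaced by Haar integration over $SO(k)$: the angular integral becomes $\int_{SO(k)}e^{2\pi i t\,\mathrm{tr}(RB(\vec m))}dR$ with $B(\vec m)=\sum_j z_j m_j^{\top}\in\R^{k\times k}$, which equals $1$ when $B(\vec m)=0$ and decays to $0$ as $t\to\infty$ otherwise by standard oscillatory-integral decay on $SO(k)$; the dominated-convergence step concludes the general case without change.
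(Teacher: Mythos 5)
Your argument is correct, and it takes a genuinely different route from the paper. The paper proves the $k=2$ case by an equidistribution argument: it classifies the configuration $V=(z_j)\in\C^r$ according to whether it is ``typical'', merely $\Q[i]$-generic, or degenerate; in the typical case it uses minimality of the linear flow $X\mapsto X+sV'$ on the torus $\R^{2r}/\Z^{2r}$ to find $s=s(t)\in[0,L]$ with $\fractional{(t+is(t))V}<\eps/2$, and then converts the translation $t+is(t)$ into the rotation $e^{is(t)/t}\,t=t+is(t)+o(1)$; the remaining cases are reduced to the typical one by showing that all but countably many rotations make a $\Q[i]$-generic $V$ typical, and by clearing denominators over $\Q[i]$. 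Higher even $k$ are then obtained from $k=2$ by a product construction (closure of $\mtc C$ under addition), and odd $k$ are not treated at all. Your rotation-averaging argument with a positive-definite bump, Poisson summation, and the $J_0$ dichotomy avoids every piece of this case analysis on the arithmetic of the $z_j$, and your dominated-convergence step correctly sidesteps the fact that $|A(\vec m)|$ has no positive lower bound off $L$. For $k=2$ your proof is complete (the remark that the translation $v$ ``may be absorbed into $\Z[i]$'' is not literally true of arbitrary embeddings, but is harmless since you only need to exhibit one good embedding).

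One soft spot in the $k\ge3$ sketch: the assertion that $\int_{SO(k)}e^{2\pi i t\,\mathrm{tr}(RB)}\,dR\to0$ for every $B\ne0$ does require proof. One must first check that $R\mapsto\mathrm{tr}(RB)$ is non-constant on $SO(k)$: vanishing of the differential at $R=I$ forces $B$ symmetric, and rotations in the coordinate planes then force $\lambda_i+\lambda_j=0$ for all $i\ne j$, which gives $B=0$ only when $k\ge3$. (For $k=2$ a nonzero traceless symmetric $B$ yields a constant phase, which is exactly why your set $L$ for $k=2$ must be defined by $A(\vec m)=0$ rather than $B(\vec m)=0$; your complex formulation handles this correctly.) One then needs that a non-constant analytic phase on the compact group yields decay, e.g.\ because the pushforward of Haar measure under $R\mapsto\mathrm{tr}(RB)$ is absolutely continuous, so Riemann--Lebesgue applies. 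With that filled in, your method proves the theorem for all $k\ge2$, including the odd $k$ that the paper's product argument cannot reach.
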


We present a new proof of Theorem \ref{conj:1} which works only for even $k$.
The proof is short and self-contained, it is not based on the Furstenberg correspondence
principle (the way the approaches in the papers mentioned above are).
Our approach may lead to explicit estimates on the speed convergence  
in $\eqref{eq:conj}$ to $0$. (This direction is not pursued in this paper).

Our proof of Theorem  \ref{conj:1}  is based on the following two theorems.

\begin{thm}\label{thm:sum}
The set\, $\mtc C$ of natural numbers $k$  for which the claim of Theorem \ref{conj:1}
holds is closed under addition.
\end{thm}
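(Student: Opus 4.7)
The plan is to exploit the orthogonal decomposition $\R^{k_1+k_2} = \R^{k_1}\times\R^{k_2}$ and apply the hypothesis factor-by-factor. Fix $k_1,k_2\in\mtc{C}$ and a finite $(k_1+k_2)$-flat metric space $X$. First, choose any $f\in\J_{k_1+k_2}(X)$ and split $f(x)=(f_1(x),f_2(x))$ with $f_i(x)\in\R^{k_i}$. The finite sets $Y_i := f_i(X)\subset\R^{k_i}$, equipped with the Euclidean metric inherited from $\R^{k_i}$, are tautologically $k_i$-flat.

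Given $\eps>0$, the hypothesis $k_i\in\mtc{C}$ applied to $Y_i$ produces, for all sufficiently large $t$, isometric embeddings $g_i\in\J_{k_i}(tY_i)$ with $\max_{y\in Y_i}\fractional{g_i(y)}<\eps/\sqrt{2}$. The next step is to pair them into $g:X\to\R^{k_1+k_2}$ defined by $g(x)=\bigl(g_1(f_1(x)),\,g_2(f_2(x))\bigr)$. Since each $g_i$ multiplies Euclidean distances on $Y_i$ by $t$, a one-line Pythagorean computation gives $\|g(x)-g(x')\|^2 = t^2\|f_1(x)-f_1(x')\|^2 + t^2\|f_2(x)-f_2(x')\|^2 = t^2 d(x,x')^2$, so $g\in\J_{k_1+k_2}(tX)$. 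The same Pythagorean identity, applied now to $\Z^{k_1+k_2}=\Z^{k_1}\times\Z^{k_2}$, yields $\fractional{g(x)}^2 \le \fractional{g_1(f_1(x))}^2 + \fractional{g_2(f_2(x))}^2 < \eps^2$, hence $\tau_{k_1+k_2}(tX)<\eps$ for all sufficiently large $t$.

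There is no substantive obstacle in this strategy: the closure of $\mtc{C}$ under addition is essentially a reflex of the fact that orthogonal Euclidean factors, and the corresponding orthogonal factors of the integer lattice, combine independently under maps that respect the splitting. The one step warranting care is verifying that the paired map $g$ is honestly an isometric embedding of the scaled space $tX$, which rests on the orthogonality $\R^{k_1}\perp\R^{k_2}$ inside $\R^{k_1+k_2}$. It is worth noting that injectivity of the projections $f_i$ is not required: even when several points of $X$ share a common $f_i$-image, the combined map $f$ is injective (because $f\in\J_{k_1+k_2}(X)$), and the Pythagorean identity still delivers the scaling by $t$ demanded in the definition of $\J_{k_1+k_2}(tX)$.
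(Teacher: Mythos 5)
Your proposal is correct and follows essentially the same route as the paper: split $\R^{k_1+k_2}$ into orthogonal factors, project the embedded space onto each factor, apply the hypothesis to each projection for a common large $t$, and recombine via the Pythagorean identity for both the metric and the distance to $\Z^{k_1}\times\Z^{k_2}$. The only differences are cosmetic (the paper takes $X\subset\R^{m+n}$ from the start and uses the bound $\eps/2$ per factor instead of your $\eps/\sqrt{2}$).
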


\begin{thm}\label{thm:conj}
$2\in\mtc C$, i.e. for any finite $2$-flat metric space~$X$
\[
\lim_{t\to+\infty}\tau_2(tX)=0.
\]
\end{thm}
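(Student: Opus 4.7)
\emph{Plan.} I would prove Theorem~\ref{thm:conj} by Fourier analysis on a torus, exploiting the fact that for $k=2$ rotations of $\R^2$ about the origin are precisely multiplication by unit complex numbers. Identify $\R^2$ with $\C$ and $\Z^2$ with the Gaussian integers $\Z[i]$. Fix an isometric embedding $X\hookrightarrow\C$ with image $S=\{z_1,\dots,z_m\}$; the theorem reduces to showing that for every $\eps>0$ and every sufficiently large $t$ there exists $\alpha\in[0,2\pi)$ with $\dist(te^{i\alpha}z_j,\Z[i])<\eps$ for $j=1,\dots,m$.

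The strategy is to study the one-parameter family of curves
\[
\Phi_t\colon[0,2\pi)\to(\C/\Z[i])^m,\qquad \Phi_t(\alpha)=(te^{i\alpha}z_j\bmod\Z[i])_{j=1}^m,
\]
and to show that, as $t\to\infty$, the pushforward of normalized length measure under $\Phi_t$ converges weakly to Haar measure on some closed subgroup $T'\subset(\C/\Z[i])^m$. Granted this, $T'$ contains $0$ (being a subgroup), so $B_\eps(0)\cap T'$ has positive Haar measure in $T'$, forcing $\Phi_t^{-1}(B_\eps(0))\ne\emptyset$ for all large $t$ and supplying the required~$\alpha$.

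To verify weak convergence I would use Weyl's criterion. Characters of $(\C/\Z[i])^m$ are indexed by $w=(w_j)\in\Z[i]^m$ through $\chi_w(\zeta)=\exp(2\pi i\sum_j\re(\zeta_j\overline{w_j}))$, and with $Z(w):=\sum_jz_j\overline{w_j}\in\C$ a short calculation collapses the $w$-th Fourier coefficient of the pushforward to a Bessel integral,
\[
\int_0^{2\pi}\chi_w(\Phi_t(\alpha))\,\frac{d\alpha}{2\pi}=J_0(2\pi t|Z(w)|),
\]
with the convention $J_0(0)=1$. The classical asymptotic $J_0(x)=O(x^{-1/2})$ then forces the limit to equal $1$ when $Z(w)=0$ and $0$ when $Z(w)\ne0$; these are precisely the Fourier coefficients of Haar measure on the closed subgroup $T'\subset(\C/\Z[i])^m$ annihilated by $\{w\in\Z[i]^m:Z(w)=0\}$.

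The main obstacle is conceptual rather than technical: one must resist the temptation to identify $T'$ explicitly. All the argument actually uses about $T'$ is that it is a closed subgroup, hence contains $0$; this is automatic from pointwise convergence of characters on a compact abelian group. Once this observation is in hand the proof reduces to a single classical Bessel asymptotic, which is elementary.
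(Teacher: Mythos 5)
Your argument is correct, and it takes a genuinely different route from the paper. The paper reduces Theorem~\ref{thm:conj} to the same statement you prove (Theorem~\ref{thm:complex}), but then proceeds by a three-step case analysis on the arithmetic of $V=(z_1,\dots,z_m)$: for ``typical'' $V$ (the $2m$ real coordinates linearly independent over $\Q$) it uses minimality of the linear flow $X\mapsto X+sV'$ on $\R^{2m}/\Z^{2m}$ to find a bounded $s=s(t)$ with $\fractional{(t+is(t))V}$ small, and then notes that the rotation $\theta_t=e^{is(t)/t}$ satisfies $\theta_t t=t+is(t)+o(1)$, so a genuine rotation approximates that shear; non-typical $V$ is handled by first rotating a $\Q[i]$-generic tuple into a typical one (Proposition~\ref{prop:gen}) and then clearing denominators of the $\Q[i]$-linear relations. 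Your proof replaces all of this with one equidistribution statement: the circle $\alpha\mapsto(te^{i\alpha}z_j)_j$ converges in distribution, as $t\to\infty$, to Haar measure on the closed subgroup $T'$ annihilating $\Lambda=\{w:Z(w)=0\}\le\Z[i]^m$, via Weyl's criterion and the decay $J_0(x)=O(x^{-1/2})$; the case analysis disappears because you never identify $T'$, only use that it is a closed subgroup containing $0$, so $B_\eps(0)$ has positive limit measure and the portmanteau inequality $\liminf_t\mu_t(B_\eps(0))\ge\mu(B_\eps(0))>0$ gives nonemptiness for all large $t$. The steps you leave implicit are standard (the pointwise limit $\mathbf{1}[Z(w)=0]$ of the Fourier coefficients is the transform of Haar measure on $\Lambda^\perp$ because $\Lambda$ is a subgroup, and subsequential weak-$*$ limits are pinned down by Stone--Weierstrass). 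Your method arguably gives more for $k=2$ --- the full limiting distribution and, through the explicit $t^{-1/2}$ rate per character, a plausible path to quantitative bounds --- whereas the paper's softer flow-minimality argument avoids Bessel asymptotics and is packaged so that Theorem~\ref{thm:sum} can bootstrap it to all even $k$.
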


Theorem \ref{thm:conj} is an immediate consequence of 
Theorem~\ref{thm:real} in the next section. The proof of Theorem \ref{thm:sum}
is presented in Section \ref{sec:proof:sum}.

I am indepted to Professor Mihalis Kolountzakis for directing me to
Ziegler's paper \cite{Z} on the next day the first version of the
current note appeared on arXiv.

\section{Triangles and embeddings}

By a triangle we mean a metric space of cardinality 3. Since every
triangle is $2$-flat, we have the following corollary.

\begin{corollary}\label{cor:tri}
For every triangle $X$, $\Dst\lim_{t\to+\infty}\tau_2(tX)=0$.
\end{corollary}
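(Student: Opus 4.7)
The plan is to observe that the corollary is essentially a direct specialization of Theorem \ref{thm:conj}, so the only thing requiring justification is the $2$-flatness of an arbitrary triangle $X=\{a,b,c\}$ with metric $d$. Once this is established, applying Theorem \ref{thm:conj} to $X$ yields $\lim_{t\to+\infty}\tau_2(tX)=0$ immediately.

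To verify $2$-flatness, I would give the classical three-circle construction. Place $f(a)$ at the origin and $f(b)$ at $(d(a,b),0)\in\R^2$. Then seek $f(c)=(x,y)\in\R^2$ satisfying $x^2+y^2=d(a,c)^2$ and $(x-d(a,b))^2+y^2=d(b,c)^2$. Subtracting gives a unique value of $x$, and the remaining equation determines $y^2$. The nontrivial step is checking $y^2\geq0$; this reduces to the inequality
\[
\bigl(d(a,b)+d(a,c)+d(b,c)\bigr)\bigl(-d(a,b)+d(a,c)+d(b,c)\bigr)\bigl(d(a,b)-d(a,c)+d(b,c)\bigr)\bigl(d(a,b)+d(a,c)-d(b,c)\bigr)\geq 0,
\]
which is guaranteed by the three triangle inequalities satisfied by $d$. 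Hence $f\in\J_2(X)$ and $X$ is $2$-flat.

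With $X$ shown to be $2$-flat, Theorem \ref{thm:conj} applies to $X$ directly, yielding the claimed limit. There is no genuine obstacle here: the corollary is a restatement of Theorem \ref{thm:conj} for the particular case $|X|=3$, and the only mild verification needed is the elementary fact that every metric on three points is realized by a Euclidean triangle in the plane.
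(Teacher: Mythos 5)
Your proposal is correct and matches the paper's route exactly: the paper derives the corollary by noting that every triangle is $2$-flat and invoking Theorem \ref{thm:conj}. The only difference is that you supply the (standard) verification of $2$-flatness, which the paper states without proof.
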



By the separation of a finite metric space $X=(X,d)$ (notation:  $\mathbf{sep}(X)$)
we mean the minimal positive distance in it: \
$\Dst\mathbf{sep}(X)=\min_{\substack{a,b\in X\\ a\neq b}} d(a,b)$.

The following proposition shows that   the value $\tau_{2}(X)$  
does not need to be small for triangles $X$ with large separation $\mathbf{sep}(X)$ (cf.~Corollary~\ref{cor:tri}). 
\begin{prop}\label{prop:sep}
For all triangles
\[
X_t=\{(0,0),(0,t),(t+\tfrac12,0)\}\subset\R^2, \quad \text{with } t>0,
\]
we have \ $\tau_{2}(X_t)\geq\tfrac18$ \ and \ $\mathbf{sep}(X_t)=t$.
\end{prop}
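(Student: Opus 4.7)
The separation part is immediate: the three pairwise distances in the given triple are $t$, $t + \tfrac12$, and $\sqrt{t^2 + (t+\tfrac12)^2}$, so $\mathbf{sep}(X_t) = t$.

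For the lower bound $\tau_2(X_t) \geq \tfrac18$ the plan is to argue by contradiction. Suppose some isometric embedding $f\colon X_t \to \R^2$ achieves $\fractional{f(x)} < \tfrac18$ for every $x \in X_t$. Denote by $p_1,p_2,p_3 \in \R^2$ the images of the three vertices, with $p_1$ the image of the right-angle vertex, and pick $q_i\in\Z^2$ with $|p_i - q_i| < \tfrac18$. Writing $u := p_2 - p_1$ and $v := p_3 - p_1$, isometry gives $u\perp v$, $|u| = t$, $|v| = t+\tfrac12$. The integer vectors $a := q_2 - q_1$ and $b := q_3 - q_1$ inherit the bounds $|a - u| < \tfrac14$ and $|b - v| < \tfrac14$.

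The crux is the identity $v = (1 + \tfrac{1}{2t})\,Ru$, where $R$ is one of the two $90^\circ$ rotations, chosen so that $Ru$ points in the direction of $v$. Since $R$ preserves $\Z^2$, the vector $w := b - Ra$ lies in $\Z^2$, and a direct computation using linearity and norm-preservation of $R$ yields
\[
w - \tfrac{1}{2t}\,Ru \;=\; (b - v) \;-\; R(a - u),
\]
whose Euclidean norm is strictly less than $\tfrac14 + \tfrac14 = \tfrac12$.

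Since $\bigl|\tfrac{1}{2t}\,Ru\bigr| = \tfrac{|u|}{2t} = \tfrac12$ \emph{exactly}, the triangle inequality forces $|w| < 1$; as $w \in \Z^2$, this compels $w = 0$. But then $\bigl|w - \tfrac{1}{2t}\,Ru\bigr| = \tfrac12$, contradicting the strict bound just derived, and so $\tau_2(X_t)\geq\tfrac18$. The only mildly delicate step is fixing the orientation of $R$ correctly; both $90^\circ$ rotations preserve $\Z^2$, so this causes no difficulty, and every remaining estimate is a one-line calculation.
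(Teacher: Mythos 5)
Your proof is correct, and it is essentially the paper's argument in different packaging: the paper introduces the auxiliary point $D=(t,0)$, uses that $\dist(x-y,\Z^2)$ is a pseudometric together with the invariance of $\Z^2$ under $90^\circ$ rotation, and reads off $\tfrac12\leq 4m$ directly, which is precisely your identity $v=(1+\tfrac1{2t})Ru$ combined with the bound $|(b-v)-R(a-u)|<\tfrac14+\tfrac14$ against the exact value $|v-Ru|=\tfrac12$. The only differences are presentational (explicit nearest lattice points and a contradiction instead of a direct inequality chain), and your handling of the orientation of $R$ and of the strictness of the estimates is sound.
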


In the proof of Proposition \ref{prop:sep}, we use the fact that 
the formula
\[
\fractional{x-y}=\dist(x-y,\Z^2)\quad (x,y\in\R^2)
\]
defines a pseudometric on $\R^2$.
\begin{proof}[Proof of Proposition  \ref{prop:sep}]
Let $f\in\J_2(X_t)$ and set
\[
A=(0,0), \ B=(0,t), \ C=(t+\tfrac12,0), \ D=(t,0); \quad A,B,C,D\in\R^2.
\]
We have to show that
\[
m:=\max\big(\fractional{f(A)}, \fractional{f(B)}, \fractional{f(C)}\big)\geq \tfrac18.
\]
Since $f$ is an isometry, $|C-D|_2=\frac12$ implies $|f(C)-f(D)|_2=\frac12$, and hence
\[
\tfrac12=\fractional{f(C)-f(D)}\leq\,\fractional{f(C)-f(A)}+\fractional{f(D)-f(A)}.
\]
Note that $\fractional{f(D)-f(A)}=\fractional{f(B)-f(A)}$ because the corresponding 
vectors in $\R^2$ are perpendicular and have the same length. It follows that
\begin{align*}
\tfrac12&\leq\fractional{f(C)-f(A)}+\fractional{f(B)-f(A)}\leq\\
&\leq\,2\,\fractional{f(A)}+\fractional{f(B)}+\fractional{f(C)}\leq 4m,
\end{align*}
whence $m\geq\tfrac18$, completing the proof of Proposition \ref{prop:sep}.
\end{proof}
\vsp1
\begin{rem}
One can show that $\Dst\lim_{t\to+\infty}\tau_k(X_t)=0$, for $k\geq 3$. In general,
if 
\[
X_{s,t}=\{(0,0),(0,t),(s,0)\}\subset\R^2,
\]
then  $\Dst\lim_{\substack{s\to+\infty\\t\to+\infty}}\tau_k(X_{s,t})=0$,  for $k\geq 3$.
\end{rem}
This does not extend to $k=2$ in view of Proposition \ref{prop:sep}.


\section{Central result}
Denote by $\Z^2$ the standard lattice in the plane $\R^2$.
Denote by $\mtc R$  the set of rotations  $\rho\colon\R^2\to\R^2$ around 
the origin. 
The central result of the paper is the following theorem.
\begin{thm}\label{thm:real}
Let $S\subset\R^2$ be a finite set. Then, for every $\eps>0$, there exists $T>0$  
such that for every dilation $t>T$ there exists a rotation $\rho\in \mtc R$, $\rho=\rho(\eps,t)$,  
such that
\[
\fractional{\rho(tz)}=\dist(\rho(tz),\Z^2)<\eps, \quad \text{for all }\ z\in S.
\]
\end{thm}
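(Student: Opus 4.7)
The plan is to identify $\R^2$ with $\C$ (so that $\Z^2\cong\Z[i]$), write $S=\{z_1,\dots,z_n\}\subset\C$, and parametrise each rotation $\rho\in\mtc R$ as multiplication by $w=e^{i\theta}$ on $\C$. The desired conclusion then becomes: for all sufficiently large $t$ there exists $\theta\in[0,2\pi)$ such that every $tz_je^{i\theta}$ lies within $\eps$ of $\Z[i]$.

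My approach would be to treat this as an equidistribution problem on the $2n$-dimensional torus $\T^{2n}:=(\C/\Z[i])^n$. Consider the curve $\Gamma_t\colon[0,2\pi)\to\T^{2n}$ defined by $\Gamma_t(\theta)=(tz_je^{i\theta}\bmod\Z[i])_{j=1}^n$, and let $\mu_t$ be the push-forward of the normalised Lebesgue measure on $[0,2\pi)$. It would suffice to show that $\mu_t\to\mu_\infty$ weakly as $t\to\infty$, with $\mu_\infty$ assigning positive mass to every open neighbourhood of $\mathbf 0\in\T^{2n}$. Granted this, the Portmanteau theorem delivers $\mu_t(U)>0$ for every such neighbourhood $U$ and all large enough $t$, so the curve $\Gamma_t$ meets $U$ for some $\theta$, producing the required rotation.

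The heart of the argument is the Fourier computation. A character of $\T^{2n}$ indexed by $\mathbf m=(\mu_j)_{j=1}^n\in(\Z[i])^n$ evaluates on $\Gamma_t(\theta)$ as $\exp(2\pi i\,t\,\re(e^{i\theta}Z_{\mathbf m}))$, where $Z_{\mathbf m}:=\sum_{j=1}^n\bar\mu_jz_j\in\C$. Integrating over $\theta$ and using $\tfrac{1}{2\pi}\int_0^{2\pi}e^{ix\cos\varphi}\,d\varphi=J_0(x)$ yields $\hat\mu_t(\mathbf m)=J_0(2\pi t|Z_{\mathbf m}|)$. Since $J_0(x)\to 0$ as $x\to\infty$, this coefficient tends to $0$ whenever $Z_{\mathbf m}\neq 0$, while it is identically $1$ on the sublattice $L:=\{\mathbf m:Z_{\mathbf m}=0\}$. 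Pointwise convergence of all Fourier coefficients then forces $\mu_t\to\mu_\infty$ weakly, with $\mu_\infty$ the Haar probability measure on the closed subgroup $H\subseteq\T^{2n}$ annihilated by every character in $L$.

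Finally, $\mathbf 0\in H$ tautologically, so any open $U\ni\mathbf 0$ in $\T^{2n}$ meets $H$ in a nonempty open subset; since $H$ is compact, Haar measure gives this subset positive mass by a standard finite-cover argument, whence $\mu_\infty(U)>0$. The degenerate case $S=\{0\}$ is trivial. The main delicate point I anticipate is the Bessel-function Fourier computation combined with extracting weak convergence from the pointwise convergence of Fourier coefficients on $\T^{2n}$; both ingredients are standard but should be spelled out carefully, and one must be mindful that the limit is Haar measure on a proper subtorus rather than full Lebesgue measure on $\T^{2n}$.
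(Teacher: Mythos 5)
Your proposal is correct, but it takes a genuinely different route from the paper. The paper also passes to $\C$ and $\Z[i]$, but then argues dynamically and algebraically in stages: when $V=(z_1,\dots,z_r)$ is \emph{typical} (real and imaginary parts of the entries jointly independent over $\Q$) it uses minimality of the linear flow $X\mapsto X+sV'$ on $\R^{2r}/\Z^{2r}$ (Kronecker) to find a bounded $s=s(t)$ with $\fractional{(t+is(t))V}<\eps/2$, and then replaces the translation $t+is(t)$ by the genuine rotation $e^{is(t)/t}t$, which differs from it by $o(1)$ as $t\to\infty$; a rational--linear-algebra reduction (rotating $V$ to make it typical, then clearing denominators) handles general $V$. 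You instead average over the whole rotation group at once: the pushforward $\mu_t$ of the circle $\theta\mapsto(tz_je^{i\theta})_j$ to $(\C/\Z[i])^n$ has Fourier coefficients $J_0(2\pi t|Z_{\mathbf m}|)$, the decay of $J_0$ identifies the weak limit as Haar measure on the subtorus $H=L^{\perp}\ni\mathbf 0$, and Portmanteau finishes. The points you flag do all go through: $\mathbf 1_L$ is positive-definite because $L=\{\mathbf m:Z_{\mathbf m}=0\}$ is a subgroup, $(L^{\perp})^{\perp}=L$ by Pontryagin duality for the discrete group $\Z[i]^n$, and a nonempty open subset of the compact group $H$ has positive Haar measure; note also that the $\liminf$ in Portmanteau is exactly what gives a single $T$ working for \emph{all} $t>T$. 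Your route avoids the paper's three-case analysis entirely and yields more (a positive proportion of rotations work, and the limit distribution of $tVe^{i\theta}$ is identified), at the price of invoking stationary-phase decay of $J_0$ and a L\'evy-type continuity theorem on the torus, whereas the paper's only nontrivial input is Kronecker's theorem plus a first-order Taylor expansion.
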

The above theorem is translated into Theorem \ref{thm:complex} below using the language 
of complex numbers. The proof is provided for Theorem \ref{thm:complex} 
(rather than for Theorem \ref{thm:real}) because it utilizes  the field structure 
of $\C$, and the alternative proof is somewhat longer. 

One identifies $\R^2$ with the complex plane $\C$, the lattice $\Z^2$ 
becomes the set $\Z[i]$ of Gaussian integers, and one sets the notation:
\[
\fractional z=\dist(z,\Z[i])=\min_{w\in \Z[i]}\,|z-w|, \quad \text{for }\  z\in\C,
\]
and
\[
\fractional V=\max_{1\leq k\leq r} \fractional{z_k}, \quad \text{ where }\ V=(z_k)_{k=1}^r\in\C^{r}.
\]

Denote by $\T=\{z\in \C\mid|z|=1\}$ the unit circle in $\C$.

\begin{thm}\label{thm:complex}
Let\, $V=(z_k)_{k=1}^r\in\C^{r}$ be a vector, $r\geq1$. Then, for every $\eps>0$, 
there exists $T>0$  such that for every $t>T$ there is (a rotation) $\theta\in \T$  
such that 
\[
\fractional{\theta tV}<\eps.
\]
\end{thm}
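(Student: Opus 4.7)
The plan is to show existence of $\theta\in\T$ with $\fractional{\theta tV}<\eps$ via a Fourier-positivity average over $\theta$. First, I would build a smooth test function $h_\eps\colon \C/\Z[i]\to [0,\infty)$ such that (i) $h_\eps(z)=0$ whenever $\fractional{z}\geq\eps$, (ii) $h_\eps(0)>0$, and (iii) every Fourier coefficient $\hat h_\eps(n)$, $n\in \Z[i]$, is nonnegative. The standard recipe: pick $\phi_\eps\in C_c^\infty(\C)$ even, nonnegative, supported in $\{|z|<\eps/2\}$, with $\phi_\eps(0)>0$, and set $h_\eps:=\phi_\eps*\phi_\eps$ regarded as a function on $\C/\Z[i]$; then $\hat h_\eps = \hat\phi_\eps^{\,2}\geq 0$, the series of $\hat h_\eps$ is absolutely summable (since $h_\eps$ is smooth), and $\hat h_\eps(0)=(\int\phi_\eps)^2 > 0$.

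Now consider
\[
I(t) := \int_\T \prod_{k=1}^r h_\eps(\theta t z_k)\, d\theta,
\]
with normalized Haar measure. If $I(t)>0$, then some $\theta\in\T$ makes every factor strictly positive, and property (i) immediately forces $\fractional{\theta tz_k}<\eps$ for all $k$. So it suffices to show $I(t)>0$ for all sufficiently large $t$.

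Expand $h_\eps(z)=\sum_{n\in \Z[i]}\hat h_\eps(n)\,e^{2\pi i\re(\bar n z)}$, multiply out the $r$ factors, and combine exponents using the identity $\sum_k\re(\bar n_k\,\theta z_k)=\re\bigl(\theta\sum_k \bar n_k z_k\bigr)$ (the field structure of $\C$ enters here). The $\theta$-integral of each exponential is a Bessel function via the standard identity $\int_\T e^{2\pi i\re(\theta w)}\,d\theta=J_0(2\pi|w|)$, giving
\[
I(t) = \sum_{(n_1,\dots,n_r)\in \Z[i]^r}\Bigl(\prod_{k=1}^r \hat h_\eps(n_k)\Bigr)\, J_0\Bigl(2\pi t\,\abs{\textstyle\sum_k \bar n_k z_k}\Bigr).
\]
Since $|J_0|\leq 1$ and $\hat h_\eps$ is absolutely summable, dominated convergence lets one pass $t\to\infty$ inside the sum. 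Because $J_0(x)\to 0$ as $x\to\infty$, every term with $\sum_k \bar n_k z_k\neq 0$ vanishes in the limit, so
\[
\lim_{t\to\infty} I(t) = \sum_{\substack{(n_1,\dots,n_r)\in \Z[i]^r\\ \sum_k \bar n_k z_k = 0}}\prod_{k=1}^r \hat h_\eps(n_k)\;\geq\;\hat h_\eps(0)^r > 0,
\]
which forces $I(t)>0$ for all $t$ beyond some $T=T(\eps,V)$.

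The one nontrivial step is the dominated-convergence interchange, and this is precisely where the two built-in features of $h_\eps$ pay off: the smoothness supplies a summable dominant, and the pointwise bound $\hat h_\eps\geq 0$ prevents the surviving $W=0$ terms from cancelling the strictly positive contribution from $n_1=\dots=n_r=0$. Everything else is routine Fourier bookkeeping, rendered clean by the fact that $\T$ is simultaneously the rotation group of $\C$ and the unit circle in the field $\C$.
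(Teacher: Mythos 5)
Your argument is correct, but it is a genuinely different proof from the one in the paper. The paper never integrates over the rotation group: it reduces to the case where $\re\,\im(V)\in\R^{2r}$ is $\Q$-generic (``typical''), invokes minimality of the linear flow $X\mapsto X+sV'$ on $\T^{2r}$ to find $s(t)\in[0,L]$ with $\fractional{(t+is(t))V}<\eps/2$, and then observes that the rotation $\theta_t=e^{is(t)/t}$ reproduces the translation $t\mapsto t+is(t)$ up to an $o(1)$ error; the non-typical cases are handled by a separate two-step reduction (rotating a $\Q[i]$-generic $V$ to a typical one, then clearing denominators for $\Q[i]$-dependent entries). Your route — periodizing $h_\eps=\phi_\eps*\phi_\eps$ so that $\hat h_\eps\geq0$, averaging $\prod_k h_\eps(\theta tz_k)$ over $\T$, and using $\int_\T e^{2\pi i\re(\theta w)}\,d\theta=J_0(2\pi|w|)$ together with $J_0(x)\to0$ — is sound: the absolute summability of $\hat h_\eps$ justifies both the term-by-term integration and the dominated-convergence limit, and the positivity of the coefficients guarantees that the degenerate frequencies $\sum_k\bar n_kz_k=0$ can only reinforce the main term $\hat h_\eps(0)^r>0$. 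Notably, this makes the paper's entire case analysis unnecessary: rational dependences among the $z_k$ cost you nothing because the corresponding terms are nonnegative rather than needing to be absent. What your approach buys in addition is quantitativeness in principle — $J_0(x)=O(x^{-1/2})$ plus the rapid decay of $\hat h_\eps$ would let one estimate $T(\eps,V)$ explicitly, which the paper flags as a direction it does not pursue (though to extract such a bound you would need to replace the soft dominated-convergence step with a uniform tail estimate, e.g.\ a lower bound on $\min\{\abs{\sum_k\bar n_kz_k}: \sum_k\bar n_kz_k\neq0,\ \abs{n_k}\leq N\}$ for a suitable truncation level $N$). What the paper's approach buys is that it isolates the one genuinely two-dimensional idea (rotations approximate translations in the tangent direction) and leans on a standard equidistribution fact, at the price of the genericity bookkeeping. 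One cosmetic point: you should assume $\eps<\tfrac12$ (harmless, as the claim for smaller $\eps$ is stronger) so that the support of the periodized $h_\eps$ really is contained in $\{\fractional{z}<\eps\}$.
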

Note that \
$
\fractional{\theta tV}<\eps \iff \fractional{\theta tz_k}<\eps,  \text{ for all }1\leq k\leq r.
$

The central idea of the proof (of Theorem \ref{thm:complex}) is conveyed in
Section \ref{sec:typ} where a special case (when $V$  is typical 
in the sense of Definition \ref{def:typ} of the next section) is treated.

The proof of Theorem \ref{thm:complex} is completed in Section \ref{sec:gen}.

In the final Section 7 we state a conjecture extending Theorem 1.1 (the lattice 
$\Z^2$ is repalced by an arbitrary syndetic subset of $\R^2$).

\section{Notation and definitions}

In what follows we do not distinguish between vectors $V=(z_k)_{k=1}^r\in\C^{r}$
and $r$-tuples of its entries with appropriate $r$.
\begin{defi}\label{def:fgen}
\em   Let $\F\subset\C$ be a subfield of complex numbers. An $r$-tuple (or a vector) 
$V=(z_k)_{k=1}^r\in\C^{r}$ is called {\em$\F$-generic} if its $r$ entries are linearly
independent over $\F$.
\end{defi}
In particular, the entries of any $\F$-generic vector must be distinct.\vsp2

\ni{\bf Notation}. For any vector 
$V=(z_k)_{k=1}^r=(z_1,z_2,\ldots,z_r)\in\C^{r}$, set 
\begin{equation*}
\re(V)=(\re(z_k))_{k=1}^r\in\R^r,\qquad \im(V)=(\im(z_k))_{k=1}^r\in\R^r,
\end{equation*}
(where $\re(\cdot),\,\im(\cdot)$ stand for real and imaginary parts, respectively), and let
\[
\re\,\im(V)=(\re(z_1),\ldots,\re(z_r),\im(z_1),\ldots,\im(z_r))\in\R^{2r}
\]
denote the result of concatenation of $\re(V)$ and $\im(V)$.
\begin{defi} \label{def:typ}
\em A vector $V=(z_k)_{k=1}^r=(z_1,z_2,\ldots, z_r)\in\C^r$ is called {\em typical}
if the vector  $\re\,\im(V)\in\R^{2r}$ is $\Q$-generic in the sense of 
Definition \ref{def:fgen}.
\end{defi}
We will use the following lemma. 
\begin{lem}\label{lem:dens}
Let  $V=(z_k)_1^r\in\C^r$  be a typical $r$-tuple, and let $\eps>0$ be given. 
Then there is a positive number $L=L(V,\eps)>0$ such that for any other $r$-tuple \ 
$W=(w_k)_1^r\in\C^r$  there exists a number $s\in[0,L]$, 
such that \ $\fractional{W+sV}<\eps$  (equivalently, 
$\fractional{w_k+sz_k}<\eps$, for all $1\leq k\leq r$).
\end{lem}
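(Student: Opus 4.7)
The plan is to recast the statement as a uniform density question for a linear flow on the torus $\T^{2r}$. Identify $\C/\Z[i]$ with $\R^2/\Z^2$ via the real/imaginary decomposition and let $\pi\colon\C^r\to\T^{2r}$ denote the induced projection, endowing $\T^{2r}$ with the metric $d$ that makes $d(\pi(u),0)=\fractional u$ for $u\in\C^r$. Then the desired inequality $\fractional{W+sV}<\eps$ is equivalent to $d(s\cdot\pi(V),-\pi(W))<\eps$, and the lemma asks for $s$ in a fixed bounded interval $[0,L]$ realizing this, uniformly in $W$.

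First I would apply the continuous Kronecker--Weyl theorem to the one-parameter subgroup $\{s\cdot\pi(V):s\in\R\}\subset\T^{2r}$. Its direction vector has $2r$ real coordinates, namely the entries of $\re\,\im(V)$, which by the typicality hypothesis are linearly independent over $\Q$. Hence this subgroup is equidistributed---in particular dense---in the compact group $\T^{2r}$; the same holds for the forward orbit $\{s\cdot\pi(V):s\geq 0\}$.

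Second, I would promote pointwise density to uniform coverability by a bounded time interval via a compactness argument. Pick a finite $\tfrac{\eps}{2}$-net $\{p_1,\ldots,p_N\}\subset\T^{2r}$. By density, select $s_i\geq 0$ with $d(s_i\cdot\pi(V),p_i)<\tfrac{\eps}{2}$ for each $i$, and set $L:=\max_i s_i$. For any $W\in\C^r$, the point $-\pi(W)$ lies within $\tfrac{\eps}{2}$ of some $p_i$, whence the triangle inequality gives $d(s_i\cdot\pi(V),-\pi(W))<\eps$, i.e., $\fractional{W+s_iV}<\eps$ with $s_i\in[0,L]$. The only nontrivial ingredient is Kronecker--Weyl density; the one subtlety, namely that $L$ must be chosen independently of $W$, is exactly what the finite-net argument delivers via compactness of $\T^{2r}$.
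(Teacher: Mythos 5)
Your proposal is correct and follows essentially the same route as the paper: pass to the linear flow in direction $\re\,\im(V)$ on $\T^{2r}$, use $\Q$-linear independence of its $2r$ coordinates (the continuous Kronecker--Weyl/minimality statement) to get density, and extract a uniform $L$ by a compactness argument. The paper simply asserts that minimality forces all orbits to become $\eps$-dense in a common finite time, whereas you make that step explicit with the finite $\tfrac{\eps}{2}$-net and translation invariance; the substance is the same.
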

\begin{proof}
Set 
\v{-9}
\begin{align*}
V'&=\re\,\im(V)\,=(v_k)_{k=1}^{2r}\, \in\R^{2r} \quad \text{and} \\
W'&=\re\,\im(W)=(u_k)_{k=1}^{2r}\in\R^{2r}
\end{align*}

In order to prove the lemma, it is enough to establish the existence of 
a positive number $L=L(V',\eps)$ (not depending on $W'$) such that
\[
\min_{0\leq s\leq L} \fractional{W'+sV'}<\eps
\]
(equivalently, $\dist(W'+sV',\Z^{2r})<\eps$).

This existence follows from the minimality of the linear flow 
\[
T^{t}X=X+tV'\h{-3}\pmod 1,\qquad X\in\R^{2r}/\Z^{2r},
\]
on the torus $\R^{2r}/\Z^{2r}$ in the direction $V'$. (Since $V$ is typical,  
all the entries $v_k$ of the vector $V'$ are linearly independent, and the minimality 
holds, see e.g. \cite[Proposition 1.5.1]{KaHa} from the dynamical point of view, 
or \cite[\S9, Example 9.3 and Excercise 9.27]{KuN}, from the number theory perspective).

Indeed, the minimality of the linear flow implies that all orbits become 
$\eps$-dense in the same finite time $L$.
\end{proof}
\section{Proof of Theorem \ref{thm:complex}, typical case}\label{sec:typ}
In this section we prove Theorem \ref{thm:complex} under the added assumption that the
$r$-tuple 
\[
V=(z_k)_{k=1}^r\in\C^r
\]
is typical (Definition \ref{def:typ}).
Let  $\eps>0$ be given. Set $W=-it\,V=(-it\,z_k)_{k=1}^r$. By Lemma~\ref{lem:dens}, 
there exists $L>0$  such that for every $t>0$ there exists $s(t)\in [0,L]$  such that
\[
\fractional{\,\big(-it+s(t)\big)V\,}<\eps/2,  \quad t\in\R.
\]
%

Since multiplication by $i$ is an isometry in $\C$, we obtain
\begin{equation}\label{eq:dist1}
\fractional{\,\big(t+is(t)\big)V\,}<\eps/2,  \quad t\in\R.
\end{equation}
For $t>0$, set 
\[
\theta_t=e^{\frac{is(t)}t}\in \T=\{z\in\C\mid |z|=1\}.
\] 
Then
\[
\theta_{t}\,t=e^{\frac{is(t)}t}t=\Big(1+\frac{is(t)}t+O(t^{-2})\Big)\cdot 
     t=t+is(t)+o(1)   \ \ \text{ (as }t\to\infty),
\]
whence, for large $t>0$, we obtain
\begin{equation}\label{eq:dist2}
\big|\,\theta_{t}tV\,-\,\big(t+is(t)\big)\,V\,\big|<\eps/2.
\end{equation}

The inequalities  \eqref{eq:dist1} and \eqref{eq:dist2} imply that
$\fractional{\,\theta_{t}tV\,}<\eps$ for large $t$,
completing the proof of Theorem \ref{thm:complex} 
(under the added assumption that $V$ is typical).

\section{The case of $V$ being $\Q[i]$-generic}\label{sec:gen}
In this section we prove Theorem \ref{thm:complex} under the added assumption that the
$r$-tuple 
\[
V=(z_k)_{k=1}^r\in\C^r
\]
is\, $\Q[i]$-generic (Definition \ref{def:fgen})  where 
$\Q[i]=\{z\in\C\mid \re(z),\im(z)\in\Q\}$. 

Note that this condition on $V$ 
($\Q[i]$-genericity) is weaker than the one imposed in the preceding section
(for $V$ to be typical).

In view of the result in the preceding section, it is enough to show that $\theta V$ is
typical for some $\theta\in\T=\{z\in\C\mid |z|=1\}$.
\begin{prop}\label{prop:gen}
Let $V=(z_k)_{k=1}^r\in\C^r$ be a $\Q[i]$-generic vector. Then for all but at most 
a countable set of $\phi\in[0,2\pi)$ the vector $e^{i\phi}V$ is typical.
\end{prop}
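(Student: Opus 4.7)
The plan is to reinterpret a hypothetical $\Q$-linear dependence among the $2r$ real numbers making up $\re\,\im(e^{i\phi}V)$ as a single complex equation of the form $\re(e^{i\phi} Z)=0$ for some nonzero $Z$ determined by the would-be dependence, and then observe that such an equation cuts out at most two points on $[0,2\pi)$, while the set of possible dependencies is countable.

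First I would fix $\phi$ and assume, for contradiction with typicality, that there is a nontrivial rational relation
\[
\sum_{k=1}^{r} a_k\,\re(e^{i\phi}z_k) + \sum_{k=1}^{r} b_k\,\im(e^{i\phi}z_k) = 0,
\]
with $(a_1,\ldots,a_r,b_1,\ldots,b_r)\in\Q^{2r}\setminus\{0\}$. The key algebraic move is to set $c_k = a_k - i b_k \in \Q[i]$ for $1\leq k\leq r$; then for each $k$ one has $\re(c_k w)=a_k\re(w)+b_k\im(w)$, so the displayed relation is equivalent to
\[
\re\!\left(e^{i\phi}\,Z\right) = 0, \qquad Z \bydef \sum_{k=1}^{r} c_k z_k \in \C.
\]
Since the rational coefficients $(a_k,b_k)$ are not all zero, at least one $c_k\in\Q[i]$ is nonzero, and the $\Q[i]$-genericity of $V$ then forces $Z\neq 0$.

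Next I would use the fact that for fixed $Z\neq 0$, writing $Z=|Z|e^{i\psi}$, the equation $\re(e^{i\phi}Z)=0$ reads $\cos(\phi+\psi)=0$, which has exactly two solutions in $[0,2\pi)$. Thus each nonzero element $(a,b)\in\Q^{2r}$ rules out at most two values of $\phi$.

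Finally, the set of all nonzero $(a,b)\in\Q^{2r}$ is countable, so the union of all ``bad'' $\phi$ (those for which some nontrivial $\Q$-linear relation holds among the components of $\re\,\im(e^{i\phi}V)$) is a countable subset of $[0,2\pi)$. For any $\phi$ outside this set, no nontrivial rational relation holds, i.e.\ $e^{i\phi}V$ is typical in the sense of Definition~\ref{def:typ}. The only nontrivial step is the bookkeeping in passing from a $\Q$-linear relation on $\re\,\im(e^{i\phi}V)$ to the single complex identity $\re(e^{i\phi}Z)=0$; everything else is routine.
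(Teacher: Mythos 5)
Your proof is correct, and it takes a more direct route than the paper. The paper factors the argument through two lemmas: it first shows (Lemma~\ref{lem:2typ}) that the conjugate-doubled vector $W_\phi=(e^{i\phi}z_1,\ldots,e^{i\phi}z_r,e^{-i\phi}\bar z_1,\ldots,e^{-i\phi}\bar z_r)$ remains $\Q[i]$-generic for all but countably many $\phi$, by observing that $U\cdot W_\phi=e^{i\phi}S_1+e^{-i\phi}S_2$ is analytic in $\phi$ and not identically zero; it then needs a second lemma (Lemma~\ref{lem:gentyp}) with a dimension count to convert $\Q[i]$-genericity of $(z_k,\bar z_k)$ into typicality of $V$. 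You collapse both steps: a putative $\Q$-relation among the entries of $\re\,\im(e^{i\phi}V)$ is packaged as $\re(e^{i\phi}Z)=0$ with $Z=\sum_k(a_k-ib_k)z_k$, which is nonzero by $\Q[i]$-genericity, and that equation has exactly two roots in $[0,2\pi)$. Your identity $\re(e^{i\phi}Z)=\tfrac12\bigl(e^{i\phi}Z+e^{-i\phi}\bar Z\bigr)$ is precisely the paper's expression specialized to $S_2=\bar S_1$, so the two arguments rest on the same mechanism; what yours buys is brevity (no auxiliary vector $W_\phi$, no dimension-count lemma) and the sharper quantitative statement that each rational relation excludes at most two angles, whereas the paper only records ``finitely many.'' All the individual steps check out, including the verification $\re(c_kw)=a_k\re(w)+b_k\im(w)$ for $c_k=a_k-ib_k$ and the passage from ``not all $(a_k,b_k)$ zero'' to ``not all $c_k$ zero'' to $Z\neq0$.
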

The proof (in the end of the section) is a combination of Lemmas \ref{lem:2typ} 
and~\ref{lem:gentyp} below.
\begin{lem}\label{lem:2typ}
Let $V=(z_k)_{k=1}^r\in\C^r$ be a $\Q[i]$-generic vector.  Then for all but at most a countable set
of $\phi\in[0,2\pi)$ the vector
\[
W_{\phi}=(e^{i\phi}z_1,e^{i\phi}z_2,\ldots,e^{i\phi}z_r,e^{-i\phi}\bar z_1,e^{-i\phi}\bar z_2,\ldots,
    e^{-i\phi}\bar z_r)\in \C^{2r}
\]
is also $\Q[i]$-generic.
\end{lem}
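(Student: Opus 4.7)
The plan is to argue by contradiction: if $W_\phi$ fails to be $\Q[i]$-generic, then $\phi$ must lie in an explicit countable exceptional set indexed by the offending $\Q[i]$-coefficient tuples.

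Suppose $W_\phi$ admits a nontrivial $\Q[i]$-linear relation; writing it as
\[
\sum_{k=1}^r a_k\,e^{i\phi}z_k+\sum_{k=1}^r b_k\,e^{-i\phi}\bar z_k=0
\]
with $(a_k),(b_k)\in\Q[i]^r$ not all zero, I multiply through by $e^{i\phi}$ to obtain the simpler identity
\[
e^{2i\phi}A+B=0,\qquad A:=\sum_{k=1}^r a_k z_k,\ \ B:=\sum_{k=1}^r b_k\bar z_k.
\]
The strategy is now to show that a nontrivial choice of coefficients forces both $A\neq0$ and $B\neq0$, and then to read off $\phi$ from the equation $e^{2i\phi}=-B/A$.

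For the ``$A\neq 0$ forces $B\neq 0$'' step, the $\Q[i]$-genericity of $V$ does the work: $A=0$ directly gives all $a_k=0$. The companion statement ``$B=0$ implies all $b_k=0$'' is the small subtlety I expect to be the main technical point; I would handle it by taking complex conjugates to get $\sum \bar b_k z_k=0$, and observing that $\Q[i]$ is closed under conjugation so $\bar b_k\in\Q[i]$, so another application of $\Q[i]$-genericity of $V$ forces all $b_k=0$. Hence if the relation is nontrivial, both $A$ and $B$ are nonzero.

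Given this, the relation $e^{2i\phi}=-B/A$ determines $e^{2i\phi}$ uniquely (and in particular requires $|B/A|=1$, though I do not need this), hence pins $\phi\in[0,2\pi)$ down to at most two values. The set of pairs $((a_k),(b_k))\in\Q[i]^{2r}$ is countable, so the union over all nontrivial coefficient tuples of the corresponding bad $\phi$'s is a countable subset of $[0,2\pi)$. For every $\phi$ outside this countable set, no nontrivial $\Q[i]$-relation among the entries of $W_\phi$ can exist, so $W_\phi$ is $\Q[i]$-generic, as claimed.
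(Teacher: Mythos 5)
Your proof is correct and follows essentially the same route as the paper: fix a nonzero coefficient tuple in $\Q[i]^{2r}$, use the $\Q[i]$-genericity of $V$ together with the fact that $\Q[i]$ is closed under conjugation to rule out $A=0$ and $B=0$, conclude that each tuple contributes only finitely many bad $\phi$, and take a countable union. The only (harmless) difference is in the finiteness step: you solve $e^{2i\phi}=-B/A$ explicitly to get at most two bad angles per tuple, whereas the paper observes that $\phi\mapsto e^{i\phi}A+e^{-i\phi}B$ is analytic and not identically zero, hence has finitely many zeros in $[0,2\pi)$.
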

\begin{proof}
It is enough to show that for every non-zero vector
\[
\bld 0\neq U=(a_1,a_2,\ldots,a_r,b_1,b_2,\ldots,b_r)\in (\Q[i])^{2r}
\]
the scalar product
\[
U\cdot W_{\phi}=\sum_{k=1}^r a_ke^{i\phi}z_k+\sum_{k=1}^r b_ke^{-i\phi}\bar z_k
=e^{i\phi}\sum_{k=1}^r a_kz_k+e^{-i\phi}\sum_{k=1}^r b_k\bar z_k
\]
may vanish only for a finitely many $\phi\in[0,2\pi)$.

Indeed, the alternative is that $U\cdot W_{\phi}=0$ for all $\phi$ (since\, $U\cdot W_{\phi}$
is analytic in $\phi$), and hence both sums
\[
S_1=\sum_{k=1}^r a_kz_k; \qquad S_2=\sum_{k=1}^r b_k\bar z_k
\]
vanish  (since the functions $e^{i\phi}$ and $e^{i\phi}$ are linearly independent 
over $\C$). The $\Q[i]$-genericity of $V$ and the fact that $S_1=0$ imply that all 
$a_k=0$. On the other hand,  $S_2=0$ implies $\bar S_2=\sum_{k=1}^r \bar b_kz_k=0$,
and hence all  $b_k=0$. We conclude that $U=\bld 0$, a contradiction.
\end{proof}
\begin{lem}\label{lem:gentyp}
Let $V=(z_k)_{k=1}^r\in\C^r$ be a vector and assume that the vector
\[
W=(z_1,z_2,\ldots,z_r,\bar z_1,\bar z_2,\ldots,\bar z_r)\in\C^{2r}
\]
is $\Q[i]$-generic. Then $V$ is typical.
\end{lem}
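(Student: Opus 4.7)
The plan is to prove the contrapositive in a single direct computation: I assume $V$ is not typical and convert the $\Q$-linear dependence among the real and imaginary parts of the $z_k$ into a nontrivial $\Q[i]$-linear dependence among $z_1,\ldots,z_r,\bar z_1,\ldots,\bar z_r$, contradicting $\Q[i]$-genericity of $W$.

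First, I unpack the definitions. To say $V$ is typical means (Definition \ref{def:typ}) that the $2r$ real numbers $\re(z_1),\ldots,\re(z_r),\im(z_1),\ldots,\im(z_r)$ are linearly independent over $\Q$. So if $V$ is \emph{not} typical, there exist rationals $\alpha_1,\ldots,\alpha_r,\beta_1,\ldots,\beta_r\in\Q$, not all zero, with
\[
\sum_{k=1}^r \alpha_k\re(z_k) \,+\, \sum_{k=1}^r \beta_k\im(z_k) \,=\, 0.
\]

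Next, I use $\re(z_k)=\tfrac{1}{2}(z_k+\bar z_k)$ and $\im(z_k)=\tfrac{1}{2i}(z_k-\bar z_k)$ to rewrite this identity as
\[
\sum_{k=1}^r \frac{\alpha_k-i\beta_k}{2}\,z_k \;+\; \sum_{k=1}^r \frac{\alpha_k+i\beta_k}{2}\,\bar z_k \,=\, 0.
\]
The coefficients $a_k=(\alpha_k-i\beta_k)/2$ and $b_k=(\alpha_k+i\beta_k)/2$ all lie in $\Q[i]$, so this is a $\Q[i]$-linear relation among the entries of $W=(z_1,\ldots,z_r,\bar z_1,\ldots,\bar z_r)$.

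The only step that needs a small check is that the vector $(a_1,\ldots,a_r,b_1,\ldots,b_r)\in(\Q[i])^{2r}$ is nonzero: if $a_k=b_k=0$ for every $k$, then $\alpha_k=a_k+b_k=0$ and $\beta_k=i(a_k-b_k)=0$ for all $k$, contradicting the nontriviality of the original rational relation. Thus we have a genuine nonzero $\Q[i]$-linear dependence among the entries of $W$, which contradicts the assumed $\Q[i]$-genericity of $W$. This forces $V$ to be typical, completing the proof. I do not anticipate any obstacle here — the content is purely the linear-algebra identity $\Q[i]=\Q+i\Q$ transported from $\R^{2r}$ to $\C^{2r}$ via the real/imaginary-part decomposition.
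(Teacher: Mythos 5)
Your proof is correct and is essentially the paper's argument made explicit: the paper notes that the entries of $W$ and of $\re\,\im(V)$ span the same $\Q[i]$-vector space and compares dimensions, whereas you write out one direction of that change of basis ($\re(z_k)=\tfrac12(z_k+\bar z_k)$, $\im(z_k)=\tfrac1{2i}(z_k-\bar z_k)$) as a contrapositive. The coefficient computation and the check that $(a_k,b_k)$ is not identically zero are both right.
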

\begin{proof}
It is easy to see that the $2r$ entries of the vector $W$ and the $2r$ entries of
the vector 
\[
U=\re\,\im(V)=(\re(z_1),\ldots,\re(z_r),\im(z_1),\ldots,\im(z_r))\in\R^{2r}
\]
generate the same vector space over $\Q[i]$. Denote by $d$ its dimension.

We have $d=2r$  because  $W$  is $\Q[i]$-generic.  It follows that $U$ is also 
$\Q[i]$-generic (otherwise we would have $d<2r$). Since $U$  is a real vector, it is 
in fact $\Q$-generic, i.e. $V$ is typical. This complete the proof of 
Lemma \ref{lem:gentyp}.
\end{proof}
\begin{proof}[Proof of Proposition {\em \ref{prop:gen}}]
The proof is obtained by combining statements of 
Lemmas \ref{lem:2typ} and~\ref{lem:gentyp}.
\end{proof}


\section{General case}\label{sec:gen}
It remains to consider the case when the vector $V$ is not $\Q[i]$-generic.
(Recall that the case of $\Q[i]$-generic $V$ is considered in the previous section).
Without loss of generality,  we may assume (after a rearrangement of the entries of $V$
if needed) that we have a representation
\[
V=(z_1,z_2,\ldots,z_m,w_1,w_2,\ldots,w_n)\in\C^{r}, \quad r=m+n, \quad m,n\geq1,
\]
where the first $m$ entries $z_1,z_2,\ldots,z_m$ are linearly independent over $\Q[i]$ 
and each of the consequent $n$ entries $w_k$ is a linear combination of the first 
$m$ ones:
\[
w_j=\sum_{k=1}^m f_{j,k}z_k;\  1\leq j\leq n; \quad  f_{j,k}\in \Q[i].
\] 

Select an integer $\Dst M>\sum_{k=1}^m\sum_{j=1}^n|f_{j,k}|$ \ such that
\[
Mf_{j,k}\in\Z[i], \text{ for all } 1\leq j\leq n, 1\leq k\leq m. 
\]

Let $\eps>0$ be given.
Since the vector $V_0=\frac1M(z_1,z_2,\ldots,z_m)\in \C^m$ is $\Q[i]$-generic, it follows 
from the result stated in the beginning of Section \ref{sec:gen} that for all 
sufficiently large $t>0$  there exists a point $\theta_t\in\T=\{z\in\C\mid|z|=1\}$ 
such that for all large $t>0$
\[
\fractional{\,\tfrac1M\theta_ttV_0\,}<\frac\eps{2mM^2},
\]
or, equivalently,
\begin{equation}\label{eq:m2}
\fractional{\,\frac{\theta_ttz_{k}}M\,}<\frac\eps{2mM^2}, \text{ for all }1\leq k\leq m.
\end{equation}
It follows that
\[
\fractional{\,\theta_ttz_{k}\,}<\frac\eps{2mM}<\eps, \text{ for all }1\leq k\leq m,
\]
and
\begin{align*}
\fractional{\,\theta_ttw_{j}\,}&=\fractional{\ \sum_{k=1}^m\, \theta_ttf_{j,k}z_k\,}
\leq\sum_{k=1}^m\,\fractional{\, \theta_tt(Mf_{j,k})\frac{z_k}M\,}\leq\\
&\leq   \sum_{k=1}^m \,(Mf_{j,k})\,\fractional{\, \frac{\theta_ttz_k}M\,}
\leq   \sum_{k=1}^m \,M^2\,\frac \eps{2mM^2}=\frac\eps2<\eps, \text{ for all }1\leq j\leq n.
\end{align*}

We conclude $\fractional{\,\theta_ttV\,}<\eps$ for large $t$, completing the proof of 
Theorem \ref{thm:complex}.

\section{Proof of Theorem \ref{thm:sum}}\label{sec:proof:sum}

Given that $m,n\in\mtc C$, we have to show that $p=m+n\in\mtc C$, i.e. that, given
a $p$-flat metric space $X=(X,d)$ with $p=m+n$, then  $\Dst\lim_{t\to+\infty}\tau_p(tX)=0$.

Without loss of generality, we assume that $X\subset\R^p=R^m\times\R^n$. 
Let $\eps>0$ be given. We have to show that there exists $\psi\in\J_p(tX)$
such that $\fractional{\psi(x)}<\eps$, for all $x\in X$.

Denote by $\pi_{1},\pi_{2}$
corresponding projections $\pi_1\colon\R^p\to\R^m$ and $\pi_2\colon\R^p\to\R^n$. Let
$X_1=\pi_{1}(X)\in\R^m$, $X_2=\pi_{2}(X)\in\R^n$.  

Since $m,n\in\mtc C$,  for large $t>0$  there are isometric embeddings
\[
\phi_{1,t}\colon tX_1\to\R^m, \qquad  \phi_{2,t}\colon tX_2\to\R^n.
\]
such that
\[
\fractional{\phi_{1,t}(u)}<\eps/2, \quad \text{for all } u\in X_1,
\]
and 
\[
\fractional{\phi_{2,t}(v)}<\eps/2, \quad \text{for all } v\in X_2.
\]
Set the maps
\[
\psi_1\colon X\to\R^m, \quad \psi_2\colon X\to\R^n,
\]
as the compositions
\[
\psi_1\colon=\phi_{1,t}\circ M_{t}\circ \pi_1, \quad \psi_2\colon=\phi_{2,t}\circ M_{t}\circ  \pi_2
\]
where  $M_t$ is a multiplication by $t$ operator.

Then the map $\psi=\psi_1\times\psi_2$ is an isometric embedding, \
$\psi\colon tX\to\R^m\times\R^n=\R^p$, \  such that 
\[
\fractional{\psi(x)}\leq\sqrt{\Big(\frac\eps2\Big)^2+\Big(\frac\eps2\Big)^2}<\eps,
\]
for all $x\in X$. (Recall that by definition $tX=(X,td)$, for metric spaces $X=(X,d)$).

This completes the proof of Theorem \ref{thm:sum}.


\end{document}